\documentclass[12pt,reqno]{amsart}

%%%%%%%%%%%%%%%%%%%%%%%%%%%%%%%%%%%%%%%%%%%%%%%%%%%
%								Packages									         %
%%%%%%%%%%%%%%%%%%%%%%%%%%%%%%%%%%%%%%%%%%%%%%%%%%%
\usepackage[T1]{fontenc}
\usepackage{amsmath}						
\usepackage{amssymb}
\usepackage{amsthm}
\usepackage{amscd}
\usepackage{amsfonts}
\usepackage{mathtools}
\usepackage{stmaryrd}
\usepackage{algorithm, algorithmic}
\usepackage{ wasysym }
\usepackage{caption}
\usepackage{subcaption}
\usepackage{comment}

\usepackage{extarrows}
\usepackage[colorlinks, linktocpage, citecolor = red, linkcolor = blue]{hyperref}
\usepackage{color}
\usepackage{tikz}
\usepackage{tikz-cd}
\usepackage{calrsfs}
\DeclareMathAlphabet{\pazocal}{OMS}{zplm}{m}{n}

\usepackage{fullpage}
\usepackage[shortlabels]{enumitem}

\linespread{1.1}

\usepackage{newcent}
\usepackage{fouriernc}

%%%%%%%%%%%%%%%%%%%%%%%%%%%%%%%%%%%%%%%%%%%%%%%%%%%
%								Theorems 								         %
%%%%%%%%%%%%%%%%%%%%%%%%%%%%%%%%%%%%%%%%%%%%%%%%%%%

\newtheorem{theorem}{Theorem}[section]
\newtheorem{lemma}[theorem]{Lemma}

\newtheorem{maintheorem}{Theorem}

\newtheorem{proposition}[theorem]{Proposition}

\theoremstyle{definition}

\newtheorem{example}[theorem]{Example}

%% manual theorem numbering

\newenvironment{manualtheorem}[1]{%
  \IfBlankTF{#1}
    {}
    {}%
  \manualtheoreminner
}{\endmanualtheoreminner}

\newtheorem{remark}[theorem]{Remark}

%%%%%%%%%%%%%%%%%%%%%%%%%%%%%%%%%%%%%%%%%%%%%%%%%%%
%								Operators									         %
%%%%%%%%%%%%%%%%%%%%%%%%%%%%%%%%%%%%%%%%%%%%%%%%%%%

\newcommand{\edit}[1]{{#1}}

\newcommand{\A}{\mathbb{A}}
\newcommand{\FF}{\mathbb{F}}

\newcommand{\PP}{\mathbb{P}}
\newcommand{\ZZ}{\mathbb{Z}}

\newcommand{\pB}{\pazocal{B}}

\newcommand{\pO}{\pazocal{O}}

\newcommand{\sQ}{\mathsf{Q}}
\newcommand{\pC}{\pazocal{C}}
\newcommand{\Sn}[1]{\mathfrak{S}_{#1}}

\DeclareMathOperator{\Spec}{Spec}

\DeclareMathOperator{\Aut}{Aut}

\newcommand{\GL}[2]{\operatorname{GL}_{#1}(#2)}

\DeclareMathOperator{\PGL}{PGL}
\DeclareMathOperator{\rowspan}{rowspan}

\newcommand{\simp}{\mathsf{dc}}
\newcommand{\Conf}{\mathsf{Conf}}
\newcommand{\Gr}{\mathsf{Gr}}

%%%%%%%%%%%%%%%%%%%%%%%%%%%%%%%%%%%%%%%%%%%%%%%%%%%
%                                                                          Title                                                                             %
%%%%%%%%%%%%%%%%%%%%%%%%%%%%%%%%%%%%%%%%%%%%%%%%%%%

\title{The virtual Euler characteristic for binary matroids}

 \author[M. Brandt]{Madeline Brandt}
 \address{Department of Mathematics, Vanderbilt University, TN 37212}
 \email{madeline.v.brandt@vanderbilt.edu}
 %\urladdr{\url{https://sites.google.com/view/madelinebrandt}}

 \author[J. Bruce]{Juliette Bruce}
 \address{Department of Mathematics, Dartmouth College, Hanover, NH 03755}
 \email{juliette.bruce@dartmouth.edu}
 %\urladdr{\url{https://juliettebruce.github.io}}

 \author[D. Corey]{Daniel Corey}
 \address{Department of Mathematics, Embry-Riddle Aeronautical University, Daytona Beach, Florida 32114}
 \email{\href{mailto:daniel.corey@erau.edu}{daniel.corey@erau.edu}}
 %\urladdr{\url{https://www.danieljcorey.com}}

%%%%%%%%%%%%%%%%%%%%%%%%%%%%%%%%%%%%%%%%%%%%%%%%%%%%%%

\begin{document}

\maketitle

\begin{abstract}
Inspired by Kontsevich's graphic orbifold Euler characteristic we define a virtual Euler characteristic for any finite set of isomorphism classes of matroids of rank $r$. Our main result provides a formula for the virtual Euler characteristic for the set of isomorphism classes of \edit{simple} matroids of rank $r$ realizable over $\FF_2$.  
We prove this formula by relating the virtual Euler characteristic for \edit{simple} binary matroids to the point counts of certain subsets of Grassmanians over finite fields. 
    
    \noindent \textbf{MSC 2020}: 05B35
 (primary), 05A10, 14M15, 15B33
 (secondary)
 
    \noindent \textbf{Keywords}: matroids, virtual Euler characteristic, Grassmannian, finite field
\end{abstract}

\section{Introduction}

As part of his work on graph chain complexes \cite{Kontsevich}, Kontsevich stated the following fascinating formula concerning graphs:
\begin{equation}\label{eq:graph-euler}
  \sum_{G\in \Gamma_{g}} \frac{(-1)^{|V(G)|}}{|\Aut(G)|}=\frac{B_{g}}{g(g-1)}  
\end{equation}
where $B_{g}$ denotes the $g$-th Bernoulli number and $\Gamma_{g}$ is the set of isomorphism classes of connected  genus $g$ graphs whose vertices are at least trivalent. See \cite{borinskyVogtmann22,Gerlits,Penner} for further discussion and proofs of the above formula, and \cite[\S~10]{ChanFarberGalatiusPayne} for an analogous formula for graphs with markings. In this paper we consider the question of whether similar formulas might exist for matroids. More specifically, suppose $\pC(r)$ is a finite set of isomorphism classes of matroids of rank $r$, and let $\pC(r,n)\subset \pC(r)$ be those (isomorphism classes of) matroids in $\pC(r)$ whose ground set has $n$ elements. We define the \emph{virtual Euler characteristic of $\pC(r)$} to be
\begin{equation*}
    \chi(\pC(r)) =  \sum_{n\geq r} \sum_{\sQ\in \pC(r,n)} \frac{(-1)^{n}}{|\Aut(\sQ)|}.
\end{equation*}
Here $\Aut(\sQ)$ denotes the automorphism group of the matroid $\sQ$, which is the group of bijections of the ground set of $\sQ$ preserving the bases of $\sQ$. The extent to which this virtual Euler characteristic estimates the actual Euler characteristic of the corresponding matroid homology chain complex in the sense of \cite{AlekseyevskayaBorovikGelfandWhite} depends on the asymptotic behavior of $|\Aut(\sQ)|$ as the size of the ground set of $\sQ$ grows. 
It is conjectured in \cite{MayhewNewmanWelshWhittle} that asymptotically almost all matroids have a trivial automorphism group, and some progress on this is made in  \cite{PendavinghVanDerPol}; in particular, they show that asymptotically almost all sparse-paving matroids have trivial automorphism group (and it is conjectured in \cite{MayhewNewmanWelshWhittle} that almost all matroids are sparse-paving).
 
There are many natural classes of matroids one might take for $\pC(r)$ in attempting to generalize Equation \eqref{eq:graph-euler}, for instance,  
rank $r$ simple matroids realizable over a fixed finite field, rank $r$ graphic matroids, rank $r$ cographic matroids, and rank $r$ regular matroids. 
In this paper, we focus our attention to the  case of \edit{simple} binary matroids (i.e., matroids realizable over the finite field $\FF_2$). Let $\pB(r)$ be the set of isomorphism classes of simple binary matroids of rank $r$.

\begin{maintheorem}
\label{thm:binaryOEC}
The virtual Euler characteristic for \edit{simple} binary matroids is given by
\begin{equation}\label{eqn:mainThm}
\chi(\pB(r)) = \prod_{i=1}^r\frac{1}{(1-2^i)}.
\end{equation}
\end{maintheorem}

\begin{example}
\label{ex:binary3}
We verify the formula in Theorem \ref{thm:binaryOEC} directly for the case $r = 3$. There are \edit{6} matroids in $\pB(3)$, pictured in Figure \ref{fig:binaryMatroidsRank3}. Denote by $\Sn{n}$ the symmetric group on $\{1,2,\ldots,n\}$.  The automorphism groups of these matroids are, left to right, $\Sn{3}$, $\Sn{4}$, $\Sn{3}$, $D_8$ (the dihedral group with 8 elements), $\Sn{4}$, and $\PGL_3(\FF_2)$, which have orders $6$, $24$, $6$, $8$, $24$, and $168$, respectively. Using this, one calculates $\chi(\pB(3)) = -1/21$, verifying Theorem \ref{thm:binaryOEC} in this case. 
\end{example}

\begin{figure}[h]
    \centering
    \includegraphics[width=\textwidth]{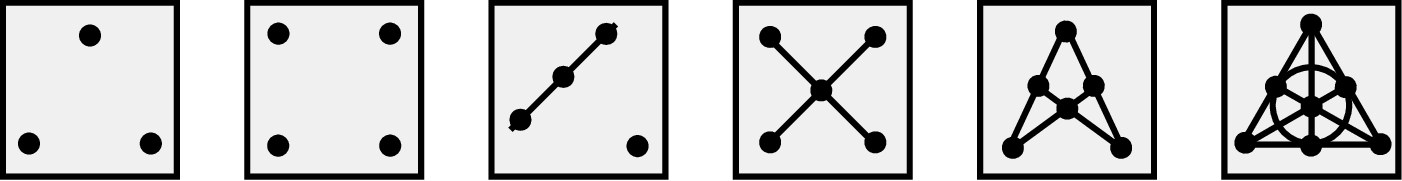}
    \caption{Rank 3 simple binary matroids}
    \label{fig:binaryMatroidsRank3}
\end{figure}

The proof of Theorem~\ref{thm:binaryOEC} revolves around the idea that counting isomorphism classes of \edit{simple} binary matroids can be phrased as a question of counting points of certain subsets of Grassmanians. In particular, the key element of our proof is the following equality
\begin{equation}\label{eq:mainEqualityAutsGrass}
    \sum_{\sQ\in \pB(r,n)} \frac{1}{|\Aut(\sQ)|} = \frac{\left|\Gr^{\simp}(r,n;\FF_2)\right|}{n!}
\end{equation}
where $\Gr^{\simp}(r,n;\FF_q)\subset \Gr(r,n;\FF_q)$ consists of the $r$-dimensional linear subspaces of $(\FF_{q})^n$ represented by $r\times n$ matrices whose columns are distinct and not identically zero.
When $q=2$, this consists of those $r$-dimensional linear spaces in $(\FF_2)^{n}$ realizing a simple binary matroid; see Proposition~\ref{prop:BrnGrsimp}.
Theorem~\ref{thm:binaryOEC} then follows from computing the point count on the right-hand side above.
We do this for $\mathbb{F}_{q}$ where $q$ is a prime power as follows. 

\begin{maintheorem}
\label{thm:grsimp}
We have
\begin{equation*}
\sum_{n \geq r} \frac{(-1)^n}{n!} \left|\Gr^{\simp}(r,n;\FF_q)\right| = \prod_{i=1}^r\frac{1}{(1-q^i)}.
\end{equation*}
\end{maintheorem}

Note that while Equation \ref{eq:mainEqualityAutsGrass} only works for $q=2$, Theorem \ref{thm:grsimp} is for any prime power $q$, see Remark \ref{sec:extendToOtherPrimesP} for a discussion on how to extend Theorem \ref{thm:binaryOEC} to other primes.  

In the course of proving Theorem \ref{thm:grsimp}, we observe the following formula.
\begin{equation}
\label{eq:grothintro}
    \left|\Gr^{\simp}(r+1,n;\FF_q)\right| = 
\sum_{k=r+1}^{n} \left|\Gr^{\simp}(r,k-1;\FF_q)\right|   \prod_{j = k+1}^n (q^{r+1} - j). 
\end{equation}
The summand calculates the the number of $(r+1)\times n$--matrices with $\FF_q$ entries in row-reduced echelon form whose last pivot occurs in the $k$th row. 
We elevate this to a recursive formula 
in Grothendieck ring of varieties.

\begin{maintheorem}
\label{thm:K0}
Given integers $r\geq 1$ and $n\geq r+1$, and an infinite field $F$, we have
\begin{equation}
\label{eq:K0}
    [\Gr^{\simp}(r+1, n; \edit{F})] = \sum_{k=r+1}^{n} [\Gr^{\simp}(r,k-1; \edit{F})] [\Conf_{n-k}(\A_{\edit{F}}^{r+1} \setminus \{k+1 \text{ points}\})].
\end{equation}
\end{maintheorem}
Here, $\Conf_{m}(X)$ the configuration space of \textit{ordered} $m$ distinct points in the variety $X$. Our proof of the above formula requires that the field has a large number of elements. If it also holds for $\FF_q$, then an application of the ring homomorphism 
\begin{equation*}
    K_{0}(\mathrm{Var}_{\FF_q}) \to \ZZ \qquad [X] \mapsto |X(\FF_{q})| 
\end{equation*}
to this formula recovers Formula \ref{eq:grothintro}.

\subsection*{Acknowledgements}
We thank Melody Chan for her thoughtful conversations on graph complexes as well as her feedback throughout the course of this project. We also thank Shiyue Li for helpful conversations during the beginning of this project and for suggesting the potential connection to $\beta$-invariants. \edit{Finally, we thank the anonymous referees for careful reading and thoughtful comments.} The first author is supported by the National Science Foundation under Award No. 2001739. The second author is grateful for the support of the Mathematical Sciences Research Institute in Berkeley, California, where she was in residence for the 2020--2021 academic year. The second author is partially supported by the National Science Foundation under Award Nos. DMS-1440140, NSF FRG DMS-2053221, and NSF MSPRF DMS-2002239. The third author is supported by NSF-RTG grant 1502553 and the SFB-TRR project ``Symbolic Tools in Mathematics and their Application'' (project-ID 286237555).

\section{Counting binary matroids via the Grassmannian}

Throughout we let $[n]\coloneqq \{1,2,\ldots,n\}$ and write $\binom{[n]}{r}$ for the set of subsets of $[n]$ of size $r$. We assume familiarity with matroids, and we point the unfamiliar reader to \cite{Oxley} as a good reference. From the perspective of \textit{bases}, a $(r,n)$--\textit{matroid} is a nonempty subset $\sQ \subset \binom{[n]}{r}$ that satisfies the basis-exchange axiom. Here, the number $r$ is the \textit{rank} of $\sQ$, and $[n]$ is its ground set. We are primarily interested in matroids arising from matrices in the following way. Denote by $M_{r,n}(F)$ the set of full-rank $r\times n$ matrices with entries in the field $F$, and let $A\in M_{r,n}(F)$. Given a subset $I \in \binom{[n]}{r}$, denote by $A_{I}$ the $r\times r$ submatrix formed by the columns of $A$ indexed by $I$ (preserving their order).  The matroid of $A$ is 
\begin{equation}
\label{eq:matroidOfMatrix}
    \sQ(A) = \left\{I \in \textstyle{\binom{[n]}{r}} \; \bigg| \; \det A_{I} \neq 0 \right\}.
\end{equation}
A $(r,n)$--matroid $\sQ$ is \textit{realizable over $F$} if it is of the form $\sQ(A)$ for some $A\in M_{r,n}(F)$. In this case, the matrix $A$ is a \textit{realization} of $\sQ$. A \textit{binary matroid} is a matroid realizable over $\FF_{2}$. 

The assignment $A \mapsto \sQ(A)$ is constant on orbits of the action $\GL{r}{F} \curvearrowright M_{r,n}(F)$ by left multiplication, so $\sQ(A)$ depends only on the row span of $A$. The orbit space $\GL{r}{F} \backslash M_{r,n}(F)$ is known as the \textit{Grassmannian} $\Gr(r,n;F)$, and it is a projective algebraic variety that parameterizes the $r$-dimensional linear subspaces of $F^{n}$. Given $L\in \Gr(r,n;F)$, its matroid $\sQ(L)$ is $\sQ(A)$ where $L$ is the row span of $A$. \edit{The next proposition follows from \cite[Proposition~6.4.1]{Oxley}.}

\begin{proposition}
\label{prop:GrQeq1}
The assignment $L \mapsto \sQ(L)$ defines a bijection between $\Gr(r,n;\FF_2)$ and the set of all binary $(r,n)$--matroids.   
\end{proposition}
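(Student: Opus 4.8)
The plan is to treat the two directions separately: surjectivity is essentially a restatement of the definitions, while injectivity is the substantive point and rests on the fact that over $\FF_2$ a linear subspace is pinned down by its matroid.

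First I would check that the assignment is well defined and surjective onto the binary $(r,n)$--matroids. For $L \in \Gr(r,n;\FF_2)$, choose $A \in M_{r,n}(\FF_2)$ with $L = \rowspan(A)$; since $A$ has full rank, at least one $r \times r$ minor is nonzero, so $\sQ(L) = \sQ(A)$ is a nonempty subset of $\binom{[n]}{r}$, and it is a matroid (the column matroid of $A$) that is binary by construction. This shows the map lands in the set of binary $(r,n)$--matroids. Conversely, if $\sQ$ is any binary $(r,n)$--matroid, then by definition of realizability $\sQ = \sQ(A)$ for some full-rank $A \in M_{r,n}(\FF_2)$, and then $L = \rowspan(A)$ satisfies $\sQ(L) = \sQ(A) = \sQ$. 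Hence the map is surjective.

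For injectivity I would reconstruct $L$ from $\sQ(L)$ combinatorially, exploiting that over $\FF_2$ every vector equals the indicator vector $e_S = \sum_{i \in S} e_i$ of its support $S$. The idea is to read off the \emph{cocircuits} of $\sQ(L)$, which are a matroid invariant, and to identify them with the supports of the minimal-support nonzero vectors of $L$ itself. Indeed, the dual matroid $\sQ(L)^{\ast}$ is the column matroid of a parity-check matrix $B$ for $L$, i.e.\ a matrix with $\rowspan(B) = L^{\perp}$; its circuits, which are the cocircuits of $\sQ(L)$, are exactly the minimal linear dependences among the columns of $B$, that is, the supports of the minimal-support nonzero elements of $\ker(B) = L$. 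Over $\FF_2$ each such cocircuit $S$ therefore recovers the genuine vector $e_S \in L$, so from the purely combinatorial data of $\sQ(L)$ we recover a distinguished finite subset of $L$.

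It then remains to verify that these minimal-support vectors span $L$, for then recovering them recovers $L$, and $\sQ(L) = \sQ(L')$ forces $L = L'$. I expect this spanning statement to be the step that genuinely requires the $\FF_2$ hypothesis. Given a nonzero $c \in L$ that is not of minimal support, there is a nonzero $c' \in L$ with $\operatorname{supp}(c') \subsetneq \operatorname{supp}(c)$, and over $\FF_2$ one computes $\operatorname{supp}(c + c') = \operatorname{supp}(c) \setminus \operatorname{supp}(c')$; since $c = c' + (c + c')$ now expresses $c$ through two vectors of strictly smaller support, induction on support size shows every element of $L$ is a sum of minimal-support vectors. The main obstacle is precisely this support identity: it is the concrete form of the unique representability of binary matroids, and it relies on the bijection between vectors and their supports, which is exactly what breaks down over larger fields where column rescalings intervene.
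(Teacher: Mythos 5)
Your proposal is correct, and it takes a genuinely different route from the paper. The paper's injectivity argument is local and coordinate-based: assuming without loss of generality that $[r]$ is a basis of $\sQ$, it puts both realizations in row-reduced echelon form $A = [I_r \mid B]$, $A' = [I_r \mid B']$, and observes that, because the only scalars in $\FF_2$ are $0$ and $1$, each entry $b_{ij}$ is determined by the single combinatorial datum of whether $[r]\,\Delta\,\{i, r+j\}$ is a basis of $\sQ$; hence $A = A'$ outright. You instead reconstruct $L$ globally: the cocircuits of $\sQ(L)$ are the minimal supports of nonzero vectors of $L$ (this rests on the standard facts that $\sQ(L)^{\ast}$ is realized by any matrix with row space $L^{\perp}$ and that $(L^{\perp})^{\perp} = L$, the dot product on $\FF_2^n$ being nondegenerate --- worth a citation to \cite{Oxley}, but unproblematic), the $\FF_2$-specific bijection between vectors and their supports promotes each cocircuit $S$ to the genuine vector $e_S \in L$, and your symmetric-difference induction correctly shows these minimal-support vectors span $L$, so the purely combinatorial data of $\sQ(L)$ determines $L$. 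Both proofs hinge on the same feature of $\FF_2$ --- no nontrivial rescalings --- but they package it differently: the paper's argument is shorter and entirely self-contained, needing nothing beyond row reduction, while yours isolates the conceptual content (unique representability of binary matroids via recovery of the cocircuit space) and makes explicit exactly what breaks for $p > 2$, consistent with the paper's example in \S~\ref{sec:extendToOtherPrimesP} of four subspaces of $\Gr(2,3;\FF_3)$ realizing the uniform $(2,3)$--matroid.
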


Denote by $\Gr^{\simp}(r,n;F) \subset \Gr(r,n;F)$ the \textit{distinct column} locus of the Grassmannian, i.e., the set of $r$-dimensional linear subspaces represented by matrices whose columns are distinct and not identically zero. This is an open subvariety of $\Gr(r,n;F)$, see Proposition \ref{prop:GrdcOpen}.  In view of Proposition~\ref{prop:GrQeq1}, the set $\Gr^{\simp}(r,n;\FF_2)$ may be identified with the set of matrices $A\in M_{r,n}(\FF_2)$ in row-reduced echelon form, whose columns are distinct and not identically zero. Since two vectors in $(\FF_2)^r$ are distinct if and only if they are not scalar multiples of each other, we see that $\Gr^{\simp}(r,n)$ is the set of  $r$-dimensional linear subspaces whose matroids are simple, i.e., have no loops or parallel elements. So Proposition \ref{prop:GrQeq1} also implies that $|\Gr^{\simp}(r,n;\FF_2)|$ equals the number of simple binary $(r,n)$--matroids. 

The automorphism group of a matroid $\sQ$ on $[n]$ is the subgroup of $\Sn{n}$ consisting of the permutations on $[n]$ that preserve the bases of $\sQ$.  Denote by $\pB(r,n)$ the set of isomorphism classes of simple binary $(r,n)$--matroids.

\begin{proposition}
\label{prop:BrnGrsimp}
We have
\begin{equation*}
    \sum_{\sQ\in \pB(r,n)} \frac{1}{|\Aut(\sQ)|} = \frac{\left|\Gr^{\simp}(r,n;\FF_2)\right|}{n!}.
\end{equation*}
\end{proposition}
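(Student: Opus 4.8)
The plan is to recognize both sides as two different ways of enumerating the same finite set $\Gr^{\simp}(r,n;\FF_2)$, with the bridge between them supplied by the orbit--stabilizer theorem. First I would let $\Sn{n}$ act on $\FF_2^n$ by permuting coordinates. This induces an action on the Grassmannian $\Gr(r,n;\FF_2)$, and it preserves the distinct-column locus $\Gr^{\simp}(r,n;\FF_2)$, since permuting the coordinates of $\FF_2^n$ — equivalently, permuting the columns of a representing matrix — does not affect whether those columns are distinct and nonzero. So $\Sn{n}$ acts on the finite set $\Gr^{\simp}(r,n;\FF_2)$, and I can analyze this action.

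The crucial point is that the bijection $L \mapsto \sQ(L)$ of Proposition~\ref{prop:GrQeq1} is $\Sn{n}$-equivariant, where $\Sn{n}$ acts on matroids by permuting the ground set $[n]$. Concretely, if $L$ is the row span of $A$ and $\sigma \in \Sn{n}$, then $\sigma \cdot L$ is the row span of the column-permuted matrix, whose matroid is exactly the image of $\sQ(L)$ under $\sigma$. Two subspaces therefore lie in the same $\Sn{n}$-orbit if and only if their matroids are isomorphic, so the orbits of this action are precisely the isomorphism classes comprising $\pB(r,n)$. I would then identify the stabilizer of a subspace $L$: a permutation $\sigma$ fixes $L$ exactly when $\sQ(\sigma\cdot L) = \sQ(L)$, i.e.\ when $\sigma$ preserves the bases of $\sQ(L)$, which is the definition of $\Aut(\sQ(L))$. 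The hard part will be this stabilizer identification, and it is exactly where the injectivity in Proposition~\ref{prop:GrQeq1} is indispensable: a priori a permutation preserving the matroid only yields an isomorphic (possibly different) subspace, and injectivity is what forces it to fix $L$ itself as a point of the Grassmannian.

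With these identifications in hand, the remainder is the standard orbit-counting bookkeeping. For each isomorphism class in $\pB(r,n)$ I would choose a representative subspace $L$; by orbit--stabilizer its $\Sn{n}$-orbit has cardinality $n!/|\Aut(\sQ(L))|$. Since the orbits partition $\Gr^{\simp}(r,n;\FF_2)$, summing over isomorphism classes gives
\[
\left|\Gr^{\simp}(r,n;\FF_2)\right| = \sum_{\sQ\in \pB(r,n)} \frac{n!}{|\Aut(\sQ)|}.
\]
Dividing both sides by $n!$ then yields the asserted identity. Apart from the equivariance-and-stabilizer step flagged above, every ingredient is routine, and no new input beyond Proposition~\ref{prop:GrQeq1} and the basic theory of group actions is required.
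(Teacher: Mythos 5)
Your proof is correct and is essentially the same argument as the paper's: both reduce the identity to the orbit--stabilizer theorem combined with the bijection of Proposition~\ref{prop:GrQeq1}. The only difference is where the bookkeeping happens --- the paper applies orbit--stabilizer to the $\Sn{n}$-action on the set of labeled simple binary matroids and then transfers the resulting count through the bijection, whereas you transport the action through the (equivariant) bijection and run the same argument directly on $\Gr^{\simp}(r,n;\FF_2)$, correctly noting that injectivity in Proposition~\ref{prop:GrQeq1} is what identifies stabilizers with automorphism groups.
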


\begin{proof}
Let $\pO_{\sQ} = \{\sQ' \, : \, \sQ'\cong \sQ\}$. By the orbit-stabilizer formula, we have $|\mathfrak{S}_n| = |\Aut(\sQ)||\pO_{\sQ}|.$
Rearranging this identity and summing over all matroids $\sQ\in \pB(r,n)$ gives
\begin{equation*}
    \sum_{\sQ\in \pB(r,n)} \frac{1}{|\Aut(\sQ)|} = 
    \frac{1}{n!} \sum_{\sQ\in \pB(r,n)} |\pO_{\sQ}|. 
 \end{equation*}
 The sum that appears on the right is exactly the number of simple binary $(r,n)$--matroids. As remarked above, this number is $|\Gr^{\simp}(r,n;\FF_2)|$ by Proposition \ref{prop:GrQeq1}.
\end{proof}

%%%%%%%%%%%%%%%%%%%%%%%%%%%%%%%%%%%%%%%%%%%%%%%%%%%%%%%%%%%%%%%%%%%%%%%%%%
\section{Counting points in the distinct column locus of the Grassmannian}

 By Proposition \ref{prop:BrnGrsimp}, we have
 \begin{equation*}
    \chi(\pB(r)) = \sum_{n \geq r} \frac{(-1)^n}{n!} \left|\Gr^{\simp}(r,n;\FF_2)\right|.
\end{equation*}
In this section we prove Theorem \ref{thm:grsimp}, in which we compute the analog of the right-hand side of this equation where $\FF_{2}$ is replaced by any finite field $\FF_{q}$ where $q=p^e$ for any prime number $p$. First, we record three identities that we will need. 

\begin{lemma}\cite[Identity 24]{Spivey}\label{lem:24}
For any integer $n\geq0$:
\[
\sum_{k=0}^{n} \binom{n}{k}  \frac{(-1)^k}{k+1} = \frac{1}{n+1}.
\]
\end{lemma}

\begin{lemma}\cite[Identity 207]{Spivey}\label{lem:207}
For any integers $N,M,R\geq1$.
\[
\sum_{t = 0}^N \binom{N}{t} \frac{(-1)^t}{(M+t)(M+t+1) \cdots (M + R + t)}= \frac{(M-1)!(R+N)!}{(M+R+N)!R!}.
\]
\end{lemma}

\begin{lemma}
Fix a natural number $r\in \ZZ_{\geq 0}$. For any natural number $k$ such that $r \leq k \leq q^{r+1}-1$:
\begin{equation}
\label{eq:binary-lem}
\sum_{n=k}^{q^{r+1}-1} 
\frac{(-1)^n}{n!} \prod_{j = k+1}^{n} (q^{r+1} - j) 
= \frac{(-1)^k}{(q^{r+1}-1)(k-1)!}.
\end{equation}
\label{lem:weirdeqn}
\end{lemma}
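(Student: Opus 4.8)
The plan is to prove this identity for fixed $r$ by downward induction on $k$, starting from the top value $k = q^{r+1}-1$ and descending to $k = r$. Write $N \coloneqq q^{r+1}-1$ and let $S(k)$ denote the left-hand side of \eqref{eq:binary-lem}. The base case $k = N$ is immediate: the sum consists of the single term $n = N$, in which the product $\prod_{j=N+1}^{N}(q^{r+1}-j)$ is empty and hence equals $1$, so $S(N) = (-1)^N/N!$; on the right-hand side one has $(-1)^N / ((q^{r+1}-1)(N-1)!) = (-1)^N/(N \cdot (N-1)!) = (-1)^N/N!$, using $q^{r+1}-1 = N$. These agree.

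For the inductive step, I would establish a recursion relating $S(k-1)$ to $S(k)$. The key observation is that the products telescope nicely: for $n \geq k$ we have the factorization
\begin{equation*}
\prod_{j=k}^{n}(q^{r+1}-j) = (q^{r+1}-k)\prod_{j=k+1}^{n}(q^{r+1}-j),
\end{equation*}
so the $n=k-1$ term of $S(k-1)$ splits off and the remaining terms factor through $(q^{r+1}-k)$. Concretely, separating the $n = k-1$ summand from $S(k-1)$ and pulling out the common factor gives
\begin{equation*}
S(k-1) = \frac{(-1)^{k-1}}{(k-1)!} + (q^{r+1}-k)\, S(k).
\end{equation*}
Substituting the inductive hypothesis $S(k) = (-1)^k / ((q^{r+1}-1)(k-1)!)$ into this recursion, the goal becomes the purely algebraic verification that
\begin{equation*}
\frac{(-1)^{k-1}}{(k-1)!} + (q^{r+1}-k)\frac{(-1)^k}{(q^{r+1}-1)(k-1)!} = \frac{(-1)^{k-1}}{(q^{r+1}-1)(k-2)!},
\end{equation*}
which is the claimed formula for $S(k-1)$.

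I expect the main obstacle to be purely bookkeeping: correctly isolating the $n=k-1$ term (where the empty product is $1$) and confirming that every other term of $S(k-1)$ acquires exactly one extra factor $(q^{r+1}-k)$ relative to the corresponding term of $S(k)$, with the summation ranges matching up. Once the recursion $S(k-1) = (-1)^{k-1}/(k-1)! + (q^{r+1}-k)S(k)$ is in hand, the remainder is an elementary common-denominator computation: combining the two left-hand terms over $(q^{r+1}-1)(k-1)!$ produces a numerator $(-1)^{k-1}\bigl((q^{r+1}-1) - (q^{r+1}-k)\bigr) = (-1)^{k-1}(k-1)$, and cancelling the factor $(k-1)$ against $(k-1)! = (k-1)(k-2)!$ yields the right-hand side. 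An entirely equivalent route, should the downward induction feel awkward, is to substitute the conjectured closed form directly and verify the single-step recurrence as an identity in $k$; I would present whichever is cleaner once the telescoping factorization is written out explicitly.
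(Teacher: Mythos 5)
Your proposal is correct and follows essentially the same argument as the paper: descending induction on $k$, with the base case $k = q^{r+1}-1$ checked directly and the inductive step obtained by splitting off the smallest summand, factoring out the extra linear term, applying the inductive hypothesis, and simplifying over a common denominator. The only difference is cosmetic indexing --- you derive $S(k-1)$ from $S(k)$ where the paper derives $S(k)$ from $S(k+1)$.
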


\begin{proof}
The claimed identity follows from Lemma~\ref{lem:207} by multiplying by $(-1)^{k}$ and making the substitutions $M=1$, $R=k-1$, and $N=q^{r+1}-k-1$. In particular, multiplying the identity in Lemma~\ref{lem:207} by $(-1)^{k}$, making the substitution $M=1$, and simplifying gives
\begin{equation}\label{eq:lemma3.3-1}
(-1)^{k}\sum_{t=0}^{N}\binom{N}{t}\frac{(-1)^{t}\cdot t!}{(t+R+1)!}
=\frac{(-1)^{k}}{R+N+1)R!}.  
\end{equation}
Further, making the substitution $R=k-1$ and simplifying -- using the factorial definition of binomial coefficients on the left hand side -- \eqref{eq:lemma3.3-1}
becomes
\begin{equation}\label{eq:lemma3.3-2}
    \sum_{t=0}^{N}\frac{(-1)^{t+k}}{((t+k)!}\; \prod_{i=0}^{t-1}(N-i)
    =
    \frac{(-1)^{k}}{((N+k)(k-1)!}.
\end{equation}
From here the claimed identity in follows from equation \eqref{eq:lemma3.3-2} by re-indexing the product on the left hand side to start at $k+1$, re-indexing the sum on the left hand side to start at $k$, and making the final substitution $N=q^{r+1}-k-1$. 
\end{proof}

\begin{manualtheorem}{\ref{thm:grsimp}}
We have
\begin{equation*}
\sum_{n \geq r} \frac{(-1)^n}{n!} \left|\Gr^{\simp}(r,n;\FF_q)\right| = \prod_{i=1}^r\frac{1}{(1-q^i)}.
\end{equation*}
\end{manualtheorem}

\begin{proof}
We proceed by induction on $r$, starting with $r=1$. The elements of $\Gr^{\simp}(1,n;\FF_{q})$ are in bijection with ordered $(n-1)$--tuples of distinct elements of $\FF_q\setminus \{0,1\}$, and so
\begin{equation*}
    \frac{\left|\Gr^{\simp}(1,n;\FF_{q})\right|}{n!} = \frac{(q-2)!}{n!(q-n-1)!} = \frac{1}{n}\binom{q-2}{n-1}.
\end{equation*}
Summing over $n$ and applying \edit{the identity in Lemma~\ref{lem:24}} yields the theorem for $r=1$.

Then, it suffices to show that 
\begin{equation}
\label{eq:pf2eq1}
 \sum_{n = r+1}^{q^{r+1}-1} \frac{(-1)^n}{n!} \left|\Gr^{\simp}(r+1,n; \FF_q)\right| =\frac{-1}{q^{r+1}-1} \sum_{n = r}^{q^r-1} \frac{(-1)^n}{n!} \left|\Gr^{\simp}(r,n;\FF_q)\right|. 
\end{equation}

\noindent As a \edit{step} towards proving the identity in \eqref{eq:pf2eq1} we observe that
\begin{equation}
\label{eq:groth}
    \left|\Gr^{\simp}(r+1,n;\FF_q)\right| = 
\sum_{k=r+1}^{n} \left|\Gr^{\simp}(r,k-1;\FF_q)\right|   \prod_{j = k+1}^n (q^{r+1} - j). 
\end{equation}
\edit{When $k=n$, the product on the right is equal to 1.} To see this suppose $A\in M_{r+1,n}(\FF_q)$ is in row-reduced echelon form. Denote by $k$ the column index of $(0,\ldots,0,1)^t$. The upper-left $r\times (k-1)$ submatrix of $A$ represents an element of $\Gr^{\simp}(r,k-1;\FF_q)$, and there are  $\prod_{j = k+1}^n (q^{r+1} - j)$ 
many ways to fill in the remaining columns to the right of the $k$th column, and Equation \eqref{eq:groth} follows. See \S \ref{sec:K0} for a promotion of Equation \eqref{eq:groth} to the Grothendieck ring of varieties.

Using \eqref{eq:groth} we may rewrite the left hand side of Equation \eqref{eq:pf2eq1} as
\begin{equation*}
 \sum_{n = r+1}^{q^{r+1}-1} \frac{(-1)^n}{n!} \left|\Gr^{\simp}(r+1,n;\FF_q)\right|= \sum_{n = r+1}^{q^{r+1}-1} \frac{(-1)^n}{n!} \sum_{k=r+1}^{n} \left|\Gr^{\simp}(r,k-1;\FF_q)\right| \prod_{j = k+1}^n (q^{r+1} - j). 
\end{equation*}
Switching the indices and re-arranging terms, we have
\begin{equation}
\label{eq:pf2eq2}
     \sum_{n = r+1}^{q^{r+1}-1} \frac{(-1)^n}{n!} \left|\Gr^{\simp}(r+1,n;\FF_q)\right| =\sum_{k=r+1}^{q^{r}}
\left|\Gr^{\simp}(r,k-1;\FF_q)\right|
\sum_{n=k}^{q^{r+1}-1}
\frac{(-1)^n}{n!}  \prod_{j = k+1}^n (q^{r+1} - j). %(p^{r+1}-(k+1))_{n-k}. 
\end{equation}
Here, the upper bound on $k$ is $q^r$ because $\left|\Gr^{\simp}(r,k-1;\FF_q)\right| = 0$ for $k$ larger than $q^r$. 

Lemma \ref{lem:weirdeqn} computes the numerical sum in the right hand side of Equation \ref{eq:pf2eq2}, giving

\begin{equation*}
\sum_{n = r+1}^{q^{r+1}-1} \frac{(-1)^n}{n!} \left|\Gr^{\simp}(r+1,n; \FF_q)\right| 
=\sum_{k=r+1}^{q^{r}}
\left|\Gr^{\simp}(r,k-1;\FF_q)\right|
\frac{(-1)^k}{(q^{r+1}-1)(k-1)!}. 
\end{equation*}
Re-indexing, we find
\begin{equation*}
\sum_{n = r+1}^{q^{r+1}-1} \frac{(-1)^n}{n!} \left|\Gr^{\simp}(r+1,n; \FF_q)\right|  =\sum_{k=r}^{q^{r}-1}
\frac{(-1)^{k+1}}{(q^{r+1}-1)}
\frac{\left|\Gr^{\simp}(r,k;\FF_q)\right|}{k!}
\end{equation*}
as desired.
\end{proof}

\begin{remark}
This result proves Theorem \ref{thm:binaryOEC} by setting $q=2$ and applying Proposition~\ref{prop:BrnGrsimp}. 
\end{remark}

\begin{remark}
\label{sec:extendToOtherPrimesP}
Given that Theorem \ref{thm:grsimp} holds for all primes $p$, it is natural to wonder if we can compute analogous virtual Euler characteristics for arbitrary primes $p$. To be more precise, recall that a \emph{$p$-matroid} is a matroid that is realizable over $\FF_p$. Let $\pB_p(r)$ be the set of isomorphism classes of \textit{simple}  $p$-matroids of rank $r$, and $\pB_p(r,n)\subset \pB(r)$ those on the ground set $[n] = \{1,\ldots,n\}$. Define the \textit{$p$-matroid virtual Euler characteristic} by
\begin{equation*}
    \chi(\pB_p(r)) =  \sum_{n\geq r} \sum_{\sQ\in \pB_p(r,n)} \frac{(-1)^{n}}{|\Aut(\sQ)|}.
\end{equation*}
The main obstruction to proving an analogue of Theorem \ref{thm:binaryOEC} using our methods is that Proposition \ref{prop:BrnGrsimp} fails. Consider $p=3$.  There are four linear subspaces of  $\Gr(2,3;\FF_3)$ that realize the uniform $(2,3)$--matroid which are given by the row spans of the matrices:
\begin{equation*}
\begin{bmatrix}
1 & 0 & 1 \\
0 & 1 & 1
\end{bmatrix},\
\begin{bmatrix}
1 & 0 & -1 \\
0 & 1 & 1
\end{bmatrix},\
\begin{bmatrix}
1 & 0 & 1 \\
0 & 1 & -1
\end{bmatrix},\
\begin{bmatrix}
1 & 0 & -1 \\
0 & 1 & -1
\end{bmatrix},\
\end{equation*}
whereas Proposition \ref{prop:BrnGrsimp} says that over $\mathbb{F}_2$ there is only one. Thus, to get a formula for $\chi(\pB_p(r))$, we need to know the number of points the \textit{matroid stratum} for each $\sQ$, i.e., the points in $\Gr(r,n;\FF_p)$ realizing  $\sQ$. See \cite{Glynn, IampolskaiaSkorobogatovSorokin} for some results in this direction. 
\end{remark}

\section{Connections to the Grothendieck ring of varieties} \label{sec:K0}

In this section we show that the formula in Equation \eqref{eq:groth} has a promotion to an equality in the Grothendieck ring of $F$-varieties. By a \textit{$F$-variety} we mean a reduced separated finite type $F$-scheme. The Grothendieck ring of varieties over $F$, denoted by $K_0(\mathrm{Var}_{F})$, is the quotient of the free abelian group generated by isomorphism classes of $F$-varieties by relations of the form $[X] = [Z] + [X\setminus Z]$ where $Z\subset X$ is a closed subvariety. The product on $K_0(\mathrm{Var}_{F})$ is given by $[X] \cdot [Y] = [X\times_{\Spec(F)} Y]$. Thus $K_0(\mathrm{Var}_{F})$ is a commutative ring with multiplicative identity $[\Spec(F)]$. The Grassmannian $\Gr(r,n;F)$ is a closed subvariety of $\PP(\wedge^{r}F^n)\cong \PP(F^{\binom{n}{r}})$ defined by the vanishing of the Pl\"ucker ideal. For a standard reference, see \cite[Chapter~9]{Fulton:YT}. 
We may consider the class of $\Gr^{\simp}(r,n)$ in $K_0(\mathrm{Var}_{F})$ by the following proposition. 
\begin{proposition}
\label{prop:GrdcOpen}
    The subset $\Gr^{\simp}(r,n;F) \subset \Gr(r,n;F)$ is an open subvariety. 
\end{proposition}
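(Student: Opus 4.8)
The goal is to show that $\Gr^{\simp}(r,n;F)$ is an open subvariety of the Grassmannian $\Gr(r,n;F)$. The plan is to exhibit $\Gr^{\simp}(r,n;F)$ as the complement of finitely many closed subvarieties, each cut out by the vanishing of certain Pl\"ucker-type coordinates. Recall that a point $L \in \Gr(r,n;F)$ lies in the distinct column locus precisely when, for a matrix $A$ with $\rowspan(A) = L$, no column of $A$ is identically zero and no two columns of $A$ are scalar multiples of one another. I would translate each of these two failure conditions into a closed condition on the Grassmannian.

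First I would handle the zero-column condition. For a fixed column index $i \in [n]$, the $i$-th column of $A$ vanishes identically if and only if $L$ is contained in the coordinate hyperplane $\{x_i = 0\} \subset F^n$, equivalently $L$ lies in the sub-Grassmannian $\Gr(r, \{x_i = 0\})$. This is a closed condition: it is cut out by the vanishing of all Pl\"ucker coordinates $p_I$ with $i \in I$, since $L \subseteq \{x_i = 0\}$ exactly when every basis vector of $L$ has vanishing $i$-th entry. Thus the locus where column $i$ is zero is a closed subvariety $Z_i \subset \Gr(r,n;F)$.

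Next I would handle the parallel-column condition. For a fixed pair of distinct indices $i, j \in [n]$, the $i$-th and $j$-th columns of $A$ are scalar multiples of each other precisely when the projection of $L$ onto the two coordinates $(x_i, x_j)$ has rank at most one, i.e. $L$ is contained in a hyperplane of the form $\{a x_i + b x_j = 0\}$ for some $(a,b) \neq (0,0)$. Equivalently, for every two-element subset of rows the corresponding $2 \times 2$ minors involving columns $i$ and $j$ vanish; this is again expressible as the vanishing of a finite collection of polynomials in the Pl\"ucker coordinates. Concretely, columns $i$ and $j$ are proportional if and only if for all $I \in \binom{[n]}{r}$ containing neither $i$ nor $j$, the relation $p_{I \cup \{i\}} \, p_{I' \cup \{j\}} = p_{I \cup \{j\}} \, p_{I' \cup \{i\}}$ (suitably interpreted on the cover by Pl\"ucker charts) forces rank one, so the parallel locus $W_{ij}$ is closed. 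The distinct column locus is then the complement
\begin{equation*}
\Gr^{\simp}(r,n;F) = \Gr(r,n;F) \setminus \Bigl( \bigcup_{i \in [n]} Z_i \;\cup\; \bigcup_{\{i,j\}} W_{ij} \Bigr),
\end{equation*}
a finite union of closed subvarieties removed, hence open.

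The step I expect to require the most care is making the parallel-column condition genuinely closed and coordinate-independent, rather than relying on a single matrix representative $A$. Since $A$ is only well-defined up to the $\GL{r}{F}$-action, the cleanest route is to phrase everything intrinsically: work on the standard affine charts of $\Gr(r,n;F)$ where $A$ is in row-reduced echelon form with respect to a pivot set, check that $Z_i$ and $W_{ij}$ are cut out by polynomial equations in the affine coordinates on each chart, and verify that these glue to genuine closed subschemes. I would confirm that the defining equations are compatible across the chart overlaps, which follows because each condition ($L \subseteq$ a fixed coordinate hyperplane, or the rank-one condition on a coordinate projection) is manifestly intrinsic to the subspace $L$. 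With openness established over an arbitrary field $F$, the same argument applies over $\Spec \ZZ$, which is what is needed for the later Grothendieck-ring statement in Proposition~\ref{prop:K0}.
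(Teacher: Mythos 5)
Your argument has a genuine gap: you have characterized the wrong locus. The paper defines $\Gr^{\simp}(r,n;F)$ as the set of subspaces represented by matrices whose columns are pairwise \emph{distinct} and nonzero; you instead remove the loci $W_{ij}$ where columns $i$ and $j$ are \emph{proportional}. These two conditions agree only over $\FF_2$ (which is exactly the remark the paper makes when identifying $\Gr^{\simp}$ with the simple-matroid locus in the binary case), but the proposition is stated for an arbitrary field $F$, and the paper uses it over every $\FF_q$ in Theorem~\ref{thm:grsimp} and Proposition~\ref{prop:K0}. Over any field with more than two elements your set is strictly smaller than $\Gr^{\simp}(r,n;F)$: for instance, over $\FF_3$ the row span of
\begin{equation*}
\begin{pmatrix} 1 & 2 & 0 \\ 0 & 0 & 1 \end{pmatrix}
\end{equation*}
has distinct nonzero columns, hence lies in $\Gr^{\simp}(2,3;\FF_3)$, but its first two columns are proportional, so it lies in your $W_{12}$ and is wrongly excluded. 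Thus what you prove is openness of the (genuinely simple-matroid) locus, not of the distinct column locus named in the statement.

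The repair is easy and in fact simpler than what you wrote: replace $W_{ij}$ by the locus $S_{ij}$ where columns $i$ and $j$ are \emph{equal}, which is cut out by the \emph{linear} conditions $p_{I\cup i}(L) = p_{I\cup j}(L)$ (up to a sign depending on the positions of $i,j$ relative to $I$) for all $(r-1)$-subsets $I$ avoiding $i$ and $j$ --- no quadratic rank-one relations are needed. This is exactly the paper's proof: it removes the closed sets $S_{ij}$ (equal columns) and $S_i$ (zero column), the latter agreeing with your $Z_i$, whose treatment via the vanishing of all Pl\"ucker coordinates $p_I$ with $i \in I$ is correct. Your closing detour through affine charts and $\Spec \ZZ$ is unnecessary once the defining equations are written globally in Pl\"ucker coordinates, and note also that your quadratic relation for $W_{ij}$ is left ``suitably interpreted,'' with unresolved sign issues --- another reason the linear equal-columns conditions are the cleaner route.
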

We use Pl\"ucker coordinates to prove this.  Fix $L\in \Gr(r,n;F)$ given by the row-span of a matrix $A\in M_{r,n}(F)$. Given a length $r$ sequence $I$ of elements in $[n]$, denote by $A_{I}$ the submatrix of $A$ formed by the columns indexed by $I$ in the specified order.  The homogeneous Pl\"ucker coordinates of $L$ are: 
\begin{equation*}
    p_{I}(L) \coloneqq \det \, A_{I} \quad \quad \text{for} \quad \quad I \in \textstyle{\binom{[n]}{r}}.
\end{equation*}
These coordinates are well-defined up to multiplication by an element of $F\setminus \{ 0 \}$, and $p_{I}(A)$ is alternating in $I$. If $I$ is a sequence and $j\in [n]$, denote by $Ij$ the sequence obtained by appending $j$ to $I$.

Given a full-rank $r\times n$ matrix $A$,
the matroid $\sQ(A)$ of a matrix as defined in Equation \eqref{eq:matroidOfMatrix} admits the following characterization, which we also use in the proof of Proposition \ref{prop:GrdcOpen}. The matroid $\sQ(A)$ is the matroid of the vector configuration given by the columns of $A$ viewed as vectors in $F^{r}$, as defined in \cite[p.8]{Oxley}. That is, the independent sets are the subsets of column vectors that are linearly independent.

\begin{proof}[Proof of Proposition \ref{prop:GrdcOpen}]
Given indices $i,j\in [n]$ with $i\neq j$, set
\begin{align*}
        %S_{ij} &= \left\{L\in \Gr(r,n;F) \; \bigg| \; p_{I\cup i}(L) = p_{I\cup j}(L) \text{ for all } |I|=r-1, \text{ with } i,j\notin I \right\} \\
         S_{i} &= \left \{L\in \Gr(r,n;F) \; \bigg| \; p_{J}(L) = 0 \text{ for all } |J|=r, \text{ and } i\in J \right \},\\
        S_{ij} & = \left\{L\in \Gr(r,n;F) \; \bigg| \; 
        \begin{array}{l}
        p_{J}(L) = 0 \text{ for all } |J|=r \text{ with } i,j\in J,  \text{ and } \\
        p_{Ii}(L) = p_{Ij}(L) \text{ for all } |I|=r-1, \text{ with } i,j\notin I
        \end{array}
         \right\} 
    \end{align*}
For a matrix $A$, denote by $A_{i}$ the $i$th column of $A$. We claim that 
\begin{equation*}
       S_{i} = \left\{L = \rowspan(A) \; \bigg| \; A_{i} = 0\right\} \hspace{20pt} \text{and} \hspace{20pt}  S_{ij} = \left \{L = \rowspan(A) \; \bigg| \; A_{i} = A_{j} \right\}.
    \end{equation*}
    Let $L=\rowspan(A)$. First consider $S_i$ and suppose $A_{i}\neq 0$. This means that $\{i\}$ is an independent set of $\sQ(A)$, which is contained in a basis $J$ of $\sQ(A)$ \cite[Theorem~1.2.3]{Oxley}. As $p_{J}(L) \neq 0$, we have that $L\notin S_{i}$. The converse is clear. 
    
    Now  consider $S_{ij}$, and suppose $A_{i} \neq A_{j}$. If $A_{i}$ and $A_{j}$ are linearly independent, then there is a basis $J$ of $\sQ(A)$ containing $i$ and $j$ [loc.\.cit.]. This means that $p_{J}(L) \neq 0$, i.e., $L\notin S_{ij}$. So assume that $A_{i}$ and $A_{j}$ are linearly dependent, and without loss of generality, say $A_{i} \neq 0$. In terms of $\sQ(A)$, this means that $\{i,j\}$ is a dependent set and $i$ is not a loop. Therefore, every basis $J$ of $\sQ(A)$ containing $i$ does not contain $j$. Fix such a basis $J$ (which exists since $i$ is not a loop), and let $A' = A_{J}^{-1}A$, so  $A'_J = I_r$. Note that $A'_{i}$ and $A_{j}'$ are also linearly dependent and not equal. The column vector  $A'_{i}$ is a standard basis vector, say $e_{k}$, and so $A'_{j} = \lambda e_{k}$ for some $\lambda \neq 1$. Let $I = J\setminus i$. Since $p_{Ii}(L) = \pm 1$ and $p_{Ij}(L) = \pm \lambda$ (that is, both $+$ or both $-$), we have that $p_{Ii}(L) \neq p_{Ij}(L)$, and hence $L\notin S_{ij}$. As in the $S_{i}$ case, the converse is clear. 
\end{proof}

 We promote Equation \eqref{eq:groth} to a recursive equality in $K_0(\mathrm{Var}_{F})$ that relates $[\Gr^{\simp}(r+1, n; F)]$ to  $[\Gr^{\simp}(r,k-1; F)]$ and $[\Conf_{n-k}(\A_{F}^{r+1} \setminus \{k+1 \text{ points}\})]$. Before stating the theorem, let us explain what is meant this last space. 
Given a variety $X$ denote by $\Conf_{m}(X)$ the configuration space of \textit{ordered} $m$ distinct points in $X$. The space $\Conf_{m}(X)$ may be understood as the open subvariety of $X^{m}$ given by
\begin{equation*}
    \Conf_{m}(X) = X^{m} \setminus \{(x_1,\ldots, x_m) \ :\ x_i = x_j \text{ whenever } i\neq j \}.
\end{equation*}
For $r\geq 1$ and an infinite field $F$, group $\Aut(\A_{F}^{r+1})$ acts $n$-transitively  $\A_{F}^{r+1}$ for each $n\geq 1$; we comment on this in the proof of the following theorem and a provide a reference. In particular, this means that $\A_{F}^{r+1} \setminus S$ is isomorphic to $\A_{F}^{r+1} \setminus T$ whenever $S$ and $T$ are finite sets of $F$-points of the same cardinality. This means that the class $[\Conf_{n-k}(\A_{F}^{r+1} \setminus \{k+1 \text{ points}\})]$ in the Grothendieck ring is well defined.

\begin{manualtheorem}{\ref{thm:K0}}
For integers $r\geq 1$ and $n\geq r+1$, and an infinite field $F$, we have
\begin{equation}
\label{eq:K0}
    [\Gr^{\simp}(r+1, n; \edit{F})] = \sum_{k=r+1}^{n} [\Gr^{\simp}(r,k-1; \edit{F})] [\Conf_{n-k}(\A_{\edit{F}}^{r+1} \setminus \{k+1 \text{ points}\})].
\end{equation}
\end{manualtheorem}

\begin{proof}
For $r+1 \leq k \leq n$, define 
\begin{equation*}
    Z_{k} = \left\{L \in \Gr^{\simp}(r+1,n;\edit{F}) \; \bigg| \; p_{I}(L) = 0 \text{ for all } I \in \textstyle{\binom{[k-1]}{r+1}}\right\}.
\end{equation*}
The $Z_{k}$'s are closed subsets of $\Gr^{\simp}(r+1,n;\edit{F})$ and  $\Gr^{\simp}(r+1,n;\edit{F}) = Z_{r+1} \supset Z_{r+2} \supset \cdots Z_{n-1} \supset Z_{n} = \emptyset$. Set $Y_{k} = Z_{k} \setminus Z_{k+1}$. In terms of matrices, given $L = \rowspan(A)$ in $\Gr^{\simp}(r+1,n;\edit{F})$  where $A$ is in row-reduced echelon form, $L \in Z_{k}$ if the $(r+1)$st pivot of $A$ occurs in column $\geq k$, whereas $L\in Y_{k}$ if the $(r+1)$st pivot of $A$ occurs in column $k$. 

Suppose $L\in Y_{k}$. There is a unique matrix $M(L)\in M_{r+1,n}(F)$ in row-reduced echelon form such that $\rowspan(M(L))=L$, with its $k$th row is the vector $(0,\ldots,0,1)^t$. Therefore, the row-span of the upper-left $r\times (k-1)$ matrix is an element of $\Gr^{\simp}(r, k-1)$. Using this, define a morphism $\pi:Y_{k} \to \Gr^{\simp}(r,k-1;F)$ by $\pi(L)\coloneqq \rowspan(M'(L))$ where $M'(L)$ is the upper-left $r\times (k-1)$ sub-matrix of $M(L)$. One readily checks that this map is regular. We now show that $\pi$ is a fibration in the Zariski topology with fiber isomorphic to $\Conf_{n-k}(\A_{F}^{r+1}\setminus T)$ where $|T|=k+1$, i.e., for all $L' \in \Gr^{\simp}(r,k-1;F)$ there exists a Zariski open set $L'\in U \subset \Gr^{\simp}(r,k-1;F)$ such that $\pi|_{\pi^{-1}(U)}:\pi^{-1}(U)\to U$ is isomorphic to the projection $U \times \Conf_{n-k}(\A^{r+1}_{F}\setminus T) \to U$.

Given $I = \{i_1,\ldots, i_r\}\subset [k-1]$, consider the open affine set
\begin{equation*}
    U_{I} = \Gr^{\simp}(r, k-1; F) \cap \{p_I\neq 0\}.
\end{equation*}
The variety structure of $U_I$ and $\pi^{-1}(U_I)$ may be described using affine coordinates for the Grassmannian which we now describe. See  \cite[Chapter~3]{GKZ} for a description on the passage between Pl\"ucker and affine coordinates. 
To simplify the notation, we consider only the case $I = \{1,\ldots,r\}$. Consider the $(r+1) \times n$  matrix of variables
\begin{equation*}
    A = \left[\begin{array}{ccccccc|cccc}
        1 & 0 & \cdots & 0 &  x_{1,r+1} & \cdots & x_{1,k-1} & 0 & y_{1,k+1} & \cdots & y_{1,n} \\
        \cdots & \cdots & \cdots & \cdots & \cdots & \cdots & \cdots & \cdots & \cdots & \cdots & \cdots \\
        0 & 0 & \cdots & 1 & x_{r,r+1} & \cdots & x_{r,k-1} & 0 & y_{r,k+1} & \cdots & y_{r,n}\\
        0 & 0 & \cdots & 0 & 0 & \cdots & 0 & 1 & y_{r+1,k+1} & \cdots & y_{r+1,n}\\
    \end{array}
    \right]
\end{equation*}
Denote by 
\begin{align*}
&\mathbf{x}_0 = \mathbf{0}, \\
&\mathbf{x}_1 = \mathbf{e}_1,\ \ldots,\ \mathbf{x}_r = \mathbf{e}_r, \quad  \\ 
&\mathbf{x}_{r+1} = (x_{1,r+1},\, \ldots,\, x_{r,r+1},0)^t,\ \ldots,\ \mathbf{x}_{k-1} = (x_{1,k-1},\, \ldots,\, x_{r,k-1},0)^t, \\
&\mathbf{x}_{k} = \mathbf{e}_{r+1}. 
\end{align*}
where $\mathbf{e}_1,\ldots, \mathbf{e}_{r+1}$ are the standard basis vectors. In the following, we write the coordinates of $\mathbf{x}_{\ell}$ as $(x_{1,\ell}, \ldots, x_{r+1,\ell})^{t}$ with the understanding that some of the $x_{ij}$ are $0$ or $1$ as defined above, and we write $\mathbf{x}$ for the tuple $(\mathbf{x}_0, \ldots, \mathbf{x}_{k})$, which we view as an element of $\A_{F}^{r\times (k-r-1)}$ by just picking out the coordinates that are variable. 
Then $U_{I}$ may be identified with the Zariski open subvariety of $\A_{F}^{r\times (k-r-1)}$ where the vectors $\mathbf{x}_0, \ldots, \mathbf{x}_{k-1}$ are distinct. 
Similarly, set 
\begin{equation*}
    \mathbf{y}_{k+1} = (y_{1,k+1},\ldots, y_{r,k+1},y_{r+1,k+1})^t, \ldots, \mathbf{y}_{k+1} = (y_{1,n},\ldots, y_{r,n},y_{r+1,n})^t.
\end{equation*}
Then $\pi^{-1}(U_I)$ is the Zariski open subvariety of $\A_{F}^{r\times (k-r-1)} \times \A_{F}^{(r+1) \times (n-k)}$ where the vectors $\mathbf{x}_0, \ldots, \mathbf{x}_{k}$, $\mathbf{y}_{k+1}, \ldots, \mathbf{y}_{n}$ are distinct.  

We describe, using Lagrange interpolation, a way of providing an automorphism of $\A_{F}^{r+1}$ that brings $\mathbf{x}_0,\ldots, \mathbf{x}_{k}$ to a fixed collection $T$ of $(k+1)$ distinct points in $\A_{F}^{r+1}$.  This provides a way of producing a family of isomorphisms of $\Conf_{n-k}(\A_{F}^{r+1}\setminus \{\mathbf{x}_0, \ldots, \mathbf{x}_{k}\})$ with $\Conf_{n-k}(\A_{F}^{r+1}\setminus T)$ that depend algebraically the $x$-variables. Our argument is inspired by that in \cite{Arzhantsev}, but we must be more explicit in demonstrating this last point. 

Assume that $T = \{\mathbf{c}_{0},\ldots, \mathbf{c}_{k+1}\}$ where $\mathbf{c}_{\ell} = (c_{1,\ell}, \ldots, c_{r+1,\ell})^t$ and assume that all entries are distinct. Let $h\colon \A_{F}^{r+1}\to \A_{F}^{1}$ be a linear form 
\begin{equation*}
    h(z_1,\ldots,z_{r+1}) = z_1 + a_2 z_2 + \cdots + a_{r+1} z_{r+1}
\end{equation*}
The coefficient of $z_1$ being $1$ ensures that the map $\Phi_3$ below is invertible.  
Consider the Zariski-open subvariety $\widetilde{U}_{I,h}\subset U_{I}$ where the values $h(\mathbf{x}_i)$ are pairwise distinct; $U_{I}$ is covered by these open sets.   Define functions
\begin{equation*}
    f(\mathbf{x}, z) = \sum_{\beta=0}^{k} (c_{1,\beta} - h(\mathbf{x}_{\beta})) \prod_{\gamma \neq \beta} \frac{z-c_{2,\gamma}}{c_{2,\beta} - c_{2,\gamma}},
    \qquad 
    g_i(\mathbf{x}, z) = \sum_{\beta=0}^{k}(c_{i,\beta} - x_{\beta}) \prod_{\gamma \neq \beta} \frac{z-h(\mathbf{x}_{\gamma})}{h(\mathbf{x}_{\beta}) - h(\mathbf{x}_{\gamma})}.
\end{equation*}
These functions are defined so that
\begin{equation*}
    f(\mathbf{x}, c_{2,\ell}) = c_{1,\ell} - h(\mathbf{x}_{\ell}), \quad \text{and} \quad g_{i}(\mathbf{x}, h(\mathbf{x}_{\ell})) = c_{i,\ell} - x_{i,\ell}.
\end{equation*}

Set 
\begin{align*}
    &\Phi_1 \colon \A_{F}^{r\times(k-r-1)} \times \A_{F}^{r+1} \to \A_{F}^{r\times(k-r-1)} \times  \A_{F}^{r+1}, \\
    &\Phi_1(\mathbf{x}; z_1,\ldots,z_{r+1}) = (\mathbf{x}; z_1 + f(\mathbf{x}, z_2), z_2, \ldots, z_{r+1}),
\end{align*}
and
\begin{align*}
    &\Phi_2\colon \A_{F}^{r\times(k-r-1)} \times \A_{F}^{r+1} \to \A_{F}^{r\times(k-r-1)} \times \A_{F}^{r+1}, \\
    &\Phi_2(\mathbf{x}; z_1,\ldots,z_{r+1}) = (\mathbf{x}; z_1, z_2 + g_2(\mathbf{x}, z_1), \ldots, z_{r+1} + g_{r+1}(\mathbf{x}, z_1)),
\end{align*}
and
\begin{align*}
    &\Phi_3 \colon \A_{F}^{r\times(k-r-1)} \times \A_{F}^{r+1} \to \A_{F}^{r\times(k-r-1)} \times \A_{F}^{r+1}, \\
    &\Phi_3(\mathbf{x}; z_1,\ldots,z_{r+1}) = (\mathbf{x}; h(z_1,\ldots,z_{r+1}), z_2, \ldots, z_{r+1}).
\end{align*}
Observe that $\Phi_1$ and $\Phi_3$ are both automorphisms of $\A_{F}^{r\times(k-r-1)} \times \A_{F}^{r+1}$; their inverses are 
\begin{align*}
    &\Phi_{1}^{-1}(\mathbf{x}; w_1,\ldots,w_{r+1}) = (\mathbf{x}; w_1 - f(\mathbf{x}, w_2), w_2, \ldots, w_{r+1}) \\
    &\Phi_{3}^{-1}(\mathbf{x}; w_1,\ldots,w_{r+1}) = (\mathbf{x}; w_1 - a_2 w_2 - \cdots - a_{r+1} w_{r+1}, w_2, \ldots, w_{r+1})
\end{align*}
The map $\Phi_{2}$ is an automorphism on the Zariski open set $\widetilde{U}_{I,h} \times \A_{F}^{r+1}$ and an inverse is given by
\begin{equation*}
    \Phi_2^{-1}(\mathbf{x}; w_1,\ldots,w_{r+1}) = (\mathbf{x}; w_1, w_2 - g_2(\mathbf{x}, w_1), \ldots, w_{r+1} - g_{r+1}(\mathbf{x}, w_1))
\end{equation*}

Finally, let $\pi_{2}\colon \A_{F}^{r\times(k-r-1)} \times \A_{F}^{r+1} \to \A_{F}^{r+1}$ be the projection onto the second factor. 
Define $\Theta = \pi_{2}\circ\Phi_1\circ \Phi_2 \circ \Phi_3$. For any fixed $\mathbf{x}\in \widetilde{U}_{I,h}$, we have that $\theta_{\mathbf{x}} = \Theta(\mathbf{x}; -)$ is an automorphism of $\A_{F}^{r+1}$ that sends $\mathbf{x}_{\ell}$ to $\mathbf{c}_{\ell}$ for $\ell = 0,\ldots,k$. 

The map
\begin{align*}
    &\Psi \colon \A_{F}^{r\times (k-r-1)} \times \A_{F}^{(r+1) \times (n-k)} \to \A_{F}^{r\times (k-r-1)} \times \mathsf{Conf}_{n-k}(\A_{F}^{(r+1)}\setminus T) \\
    &\Psi(\mathbf{x}; \mathbf{y}_{k+1},\ldots, \mathbf{y}_{n}) = (\mathbf{x}; \theta_{\mathbf{x}}( \mathbf{y}_{k+1}), \ldots, \theta_{\mathbf{x}}(\mathbf{y}_{n}))
\end{align*}
restricts to an isomorphism $\pi^{-1}(\widetilde{U}_{I,h}) \to \widetilde{U}_{I,h} \times \Conf_{n-k}(\A_{F}^{(r+1)}\setminus T)$. Indeed, an inverse is given by
\begin{equation*}
    \Theta^{-1}(\mathbf{x}; \mathbf{w}_{k+1}, \ldots, \mathbf{w}_{n}) = (\mathbf{x}; \theta_{\mathbf{x}}^{-1}(\mathbf{w}_{k+1}), \ldots, \theta_{\mathbf{x}}^{-1}(\mathbf{w}_{n}) )
\end{equation*}

It is well known that if $E\to B$ is a fibration in the Zariski topology with fiber $F$, then $[E] = [B][F]$ in the Grothendieck ring of varieties. We just showed that $\pi:Y_{k} \to \Gr^{\simp}(r,k-1;F)$ is a fibration in the Zariski topology with fiber $\Conf_{n-k}(\A_{F}^{r+1} \setminus \{k+1 \text{ points}\})$, so we have
\begin{align*}
    [\Gr^{\simp}(r+1, n; \edit{F})] = 
    \sum_{k=r+1}^{n} [Y_{k}]
    = \sum_{k=r+1}^{n} [\Gr^{\simp}(r,k-1; \edit{F})] [\Conf_{n-k}(\A_{\edit{F}}^{r+1} \setminus \{k+1 \text{ points}\})]. 
\end{align*}
\end{proof}

\bibliographystyle{abbrv}
\bibliography{sample}

@misc{borinskyVogtmann22,
  doi = {10.48550/ARXIV.2202.08739},
  url = {https://arxiv.org/abs/2202.08739},
  author = {Borinsky, Michael and Vogtmann, Karen},
  keywords = {Group Theory (math.GR), High Energy Physics - Theory (hep-th), Mathematical Physics (math-ph), Combinatorics (math.CO), FOS: Mathematics, FOS: Mathematics, FOS: Physical sciences, FOS: Physical sciences, 20F65 (Primary) 18G85, 20E36, 05C30 (Secondary)},
  title = {Computing {E}uler characteristics using quantum field theory},
  publisher = {arXiv},
  year = {2022},
  copyright = {Creative Commons Attribution 4.0 International},
  note = {arXiv:2202.08739, to appear in \textit{{G}eometric methods in group theory: papers dedicated to {R}uth {C}harney}}
}

@article {Arzhantsev,
    AUTHOR = {Arzhantsev, I.},
     TITLE = {Automorphisms of algebraic varieties and infinite
              transitivity},
      NOTE = {Translated from Algebra i Analiz {\bf 34} (2022), No. 2.},
   JOURNAL = {St. Petersburg Math. J.},
  FJOURNAL = {St. Petersburg Mathematical Journal},
    VOLUME = {34},
      YEAR = {2023},
    NUMBER = {2},
     PAGES = {143--178},
      ISSN = {1061-0022,1547-7371},
   MRCLASS = {14L30 (13E10 14M25 14R10 20M32)},
  MRNUMBER = {4567613},
}

@book {GKZ,
    AUTHOR = {Gelfand, I. M. and Kapranov, M. M. and Zelevinsky, A.
              V.},
     TITLE = {Discriminants, resultants, and multidimensional determinants},
    SERIES = {Mathematics: Theory \& Applications},
 PUBLISHER = {Birkh\"auser Boston, Inc., Boston, MA},
      YEAR = {1994},
     PAGES = {x+523},
      ISBN = {0-8176-3660-9},
   MRCLASS = {14N05 (13D25 14M25 15A69 33C70 52B20)},
  MRNUMBER = {1264417},
MRREVIEWER = {I.\ Dolgachev},
       DOI = {10.1007/978-0-8176-4771-1},
       URL = {https://doi.org/10.1007/978-0-8176-4771-1},
}

@book {Oxley,
    AUTHOR = {Oxley, James},
     TITLE = {Matroid theory},
    SERIES = {Oxford Graduate Texts in Mathematics},
    VOLUME = {21},
   EDITION = {Second},
 PUBLISHER = {Oxford University Press, Oxford},
      YEAR = {2011},
     PAGES = {xiv+684},
      ISBN = {978-0-19-960339-8},
   MRCLASS = {05-01 (05B35 90C27)},
  MRNUMBER = {2849819},
MRREVIEWER = {Maruti\ M.\ Shikare},
       DOI = {10.1093/acprof:oso/9780198566946.001.0001},
       URL = {https://doi.org/10.1093/acprof:oso/9780198566946.001.0001},
}

@incollection {AlekseyevskayaBorovikGelfandWhite,
    AUTHOR = {Alekseyevskaya, Tatiana V. and Borovik, Alexandre V. and
              Gelfand, I. M. and White, Neil},
     TITLE = {Matroid homology},
 BOOKTITLE = {The {G}elfand {M}athematical {S}eminars, 1996--1999},
    SERIES = {Gelfand Math. Sem.},
     PAGES = {1--13},
 PUBLISHER = {Birkh\"{a}user Boston, Boston, MA},
      YEAR = {2000},
   MRCLASS = {05B35 (52B40)},
  MRNUMBER = {1731631},
MRREVIEWER = {Walter Wenzel},
}

@incollection {Kontsevich,
    AUTHOR = {Kontsevich, Maxim},
     TITLE = {Formal (non)commutative symplectic geometry},
 BOOKTITLE = {The {G}el'fand {M}athematical {S}eminars, 1990--1992},
     PAGES = {173--187},
 PUBLISHER = {Birkh\"{a}user Boston, Boston, MA},
      YEAR = {1993},
   MRCLASS = {58H15 (17B65 58F05)},
  MRNUMBER = {1247289},
MRREVIEWER = {Alexander A. Voronov},
}

@article {ChanFarberGalatiusPayne,
    AUTHOR = {Chan, Melody and Faber, Carel and Galatius, S\o ren and Payne,
              Sam},
     TITLE = {The {$S_n$}-equivariant top weight {E}uler characteristic of
              {$\mathcal{M}_{g,n}$}},
   JOURNAL = {Amer. J. Math.},
  FJOURNAL = {American Journal of Mathematics},
    VOLUME = {145},
      YEAR = {2023},
    NUMBER = {5},
     PAGES = {1549--1585},
      ISSN = {0002-9327,1080-6377},
   MRCLASS = {14H10 (05A15 18A25 18B40 55R40)},
  MRNUMBER = {4647653},
       DOI = {10.1353/ajm.2023.a907705},
       URL = {https://doi.org/10.1353/ajm.2023.a907705},
}

@misc{Gerlits,
  doi = {10.48550/ARXIV.MATH/0412094},
  url = {https://arxiv.org/abs/math/0412094},
  author = {Gerlits, Ferenc},
  keywords = {Quantum Algebra (math.QA), FOS: Mathematics, FOS: Mathematics, 05A15; 05C25; 05C30; 13D03; 33B15; 57M15; 81Q30},
  title = {The {E}uler characteristic of graph complexes via {F}eynman diagrams},
  publisher = {arXiv},
  year = {2004},
  copyright = {Assumed arXiv.org perpetual, non-exclusive license to distribute this article for submissions made before January 2004},
  note = {arXiv:0412094v2}
}

@article {MayhewNewmanWelshWhittle,
    AUTHOR = {Mayhew, Dillon and Newman, Mike and Welsh, Dominic and
              Whittle, Geoff},
     TITLE = {On the asymptotic proportion of connected matroids},
   JOURNAL = {European J. Combin.},
  FJOURNAL = {European Journal of Combinatorics},
    VOLUME = {32},
      YEAR = {2011},
    NUMBER = {6},
     PAGES = {882--890},
      ISSN = {0195-6698},
   MRCLASS = {05B35 (05A16 05C80)},
  MRNUMBER = {2821559},
MRREVIEWER = {Vania D. Mascioni},
       DOI = {10.1016/j.ejc.2011.01.016},
       URL = {https://doi.org/10.1016/j.ejc.2011.01.016},
}

@article {PendavinghVanDerPol,
    AUTHOR = {Pendavingh, Rudi and van der Pol, Jorn},
     TITLE = {Asymptotics of symmetry in matroids},
   JOURNAL = {J. Combin. Theory Ser. B},
  FJOURNAL = {Journal of Combinatorial Theory. Series B},
    VOLUME = {135},
      YEAR = {2019},
     PAGES = {349--365},
      ISSN = {0095-8956},
   MRCLASS = {05B35 (05E18)},
  MRNUMBER = {3926274},
MRREVIEWER = {Martin Kochol},
       DOI = {10.1016/j.jctb.2018.09.001},
       URL = {https://doi.org/10.1016/j.jctb.2018.09.001},
}

@book {Spivey,
    AUTHOR = {Spivey, Michael Z.},
     TITLE = {The art of proving binomial identities},
    SERIES = {Discrete Mathematics and its Applications (Boca Raton)},
 PUBLISHER = {CRC Press, Boca Raton, FL},
      YEAR = {2019},
     PAGES = {xiv+368},
      ISBN = {978-0-8153-7942-3},
   MRCLASS = {05-01 (05A19 11B65 60C05)},
  MRNUMBER = {3931743},
MRREVIEWER = {Wayne M. Dymacek},
       DOI = {10.1201/9781351215824},
       URL = {https://doi.org/10.1201/9781351215824},
}

@ARTICLE{IampolskaiaSkorobogatovSorokin,
  author={Iampolskaia, A.V. and Skorobogatov, A.N. and Sorokin, E.A.},
  journal={IEEE Transactions on Information Theory}, 
  title={Formula for the number of [9,3] MDS codes}, 
  year={1995},
  volume={41},
  number={6},
  pages={1667-1671},
  doi={10.1109/18.476239}}

@article{Glynn,
title = {Rings of geometries {II}},
journal = {Journal of Combinatorial Theory, Series A},
volume = {49},
number = {1},
pages = {26-66},
year = {1988},
issn = {0097-3165},
doi = {https://doi.org/10.1016/0097-3165(88)90027-1},
url = {https://www.sciencedirect.com/science/article/pii/0097316588900271},
author = {David G Glynn}
}

@article {Penner,
    AUTHOR = {Penner, R. C.},
     TITLE = {Perturbative series and the moduli space of {R}iemann
              surfaces},
   JOURNAL = {J. Differential Geom.},
  FJOURNAL = {Journal of Differential Geometry},
    VOLUME = {27},
      YEAR = {1988},
    NUMBER = {1},
     PAGES = {35--53},
      ISSN = {0022-040X},
   MRCLASS = {32G15 (14H15 57R20)},
  MRNUMBER = {918455},
MRREVIEWER = {C. Earle},
       URL = {http://projecteuclid.org/euclid.jdg/1214441648},
}

@book {Fulton:YT,
    AUTHOR = {Fulton, William},
     TITLE = {Young tableaux},
    SERIES = {London Mathematical Society Student Texts},
    VOLUME = {35},
      NOTE = {With applications to representation theory and geometry},
 PUBLISHER = {Cambridge University Press, Cambridge},
      YEAR = {1997},
     PAGES = {x+260},
      ISBN = {0-521-56144-2; 0-521-56724-6},
   MRCLASS = {05E10 (05E05 05E15 14M15 20G05)},
  MRNUMBER = {1464693},
MRREVIEWER = {Tadeusz\ J\'{o}zefiak},
}
\label{sec:biblio}

\end{document}